\def\C{{\mathbb C}}
\def\N{{\mathbb N}}
\def\Q{{\mathbb Q}}
\def\Q{{\mathbb Q}}
\def\R{{\mathbb R}}
\def\Z{{\mathbb Z}}
\def\0{{\mathbf 0}}
\def\1{{\mathbf 1}}
\def\Ocal{{\mathcal O}}
\def\fp{{\mathfrak p}}
\def\ab{\mathrm{ab}}
\def\Gal{\mathrm{Gal}}
\def\max{\mathrm{max}}
\def\ab{\mathrm{ab}}
\theoremstyle{plain}
\newtheorem{thm}{Theorem}
\newtheorem{lem}[thm]{Lemma}
\newtheorem{mylem}[thm]{Lemma}
\theoremstyle{definition}
\newcommand{\Bomb}{\cite{bombierigubler}}
\title[Totally p-adic Numbers of Degree $3$]{Totally p-adic Numbers of Degree $3$}
\author{Emerald Stacy}
\address{Emerald Stacy; Department of Mathematics and Computer Science; Washington College; 300 Washington Avenue; Chestertown MD 21620 U.S.A.}
\email{estacy2@washcoll.edu}
\begin{document}
\maketitle
\section{Introduction}

Recall that an algebraic number $\alpha$ is \textbf{totally $p$-adic} (respectively totally real) if the minimal polynomial of $\alpha$, $f_\alpha\in \Q[x]$, splits completely over $\Q_p$ (respectively $\R$). We will denote by $h(\alpha)$ the logarithmic Weil height of $\alpha$ \Bomb.

In 1975, Schinzel used the arithmetic-geometric mean inequality to prove that if $\alpha$ is a totally real algebraic integer, with $\alpha \neq 0, \pm 1$, then 
\[h(\alpha) \geq \tfrac{1}{2}\log \left(\tfrac{1+\sqrt{5}}{2}\right)\]
with equality if $\alpha =\frac{1+\sqrt{5}}{2}$ \cite{schinzel1975addendum}.  In 1993, H{\"o}hn \& Skoruppa used an auxiliary function to provide an alternate proof of Schinzel's bound \cite{MR1251237}.
Bombieri \& Zannier \cite{bombieri2001note} proved that an analogue to Schinzel's Theorem holds in $\Q_p$ for each prime $p$, although the analogous best possible lower bound is unknown. 

Additionally, there have been some results constructing totally $p$-adic (or totally real) algebraic numbers of small height.  In particular, these results provide an upper bound on the smallest height attained by $\alpha$ under certain splitting conditions. The degree of a totally $p$-adic number is the degree of its minimal polynomial with coefficients in $\Z$.
Petsche \cite{petschepreprint} proved that for odd primes $p$, there exists some totally $p$-adic $\alpha \in \overline{\Q}$ of degree $d \leq p-1$, and 
\[ 0 < h(\alpha)\leq \tfrac{1}{p-1}\log\left(\tfrac{p+\sqrt{p^2+4}}{2}\right).\]
Recently, Pottmeyer \cite{pottmeyer2018small} has improved upon Petsche's upper bound, and obtained the existence of totally $p$-adic $\alpha$ such that
\[ 0 < h(\alpha) \leq \tfrac{\log p }{p} .\]
 
In 1980, Smyth created a set of totally real numbers of small height by taking all preimages of $1$ under the map $\phi(x) = x - \tfrac{1}{x}$.  The heights of the points in this set have a limit point $\ell \approx 0.27328$ \cite{smyth1980measure}.  In \cite{petsche2019dynamical}, Petsche \& Stacy use an argument inspired by this result of Smyth to provide an upper bound on the smallest limit point of heights of totally $p$-adic numbers of degree $d$.

In this paper, we fix the degree $d$ to be $3$ and let the prime $p$ vary.  In particular, we define $\tau_{d,p}$ to be the smallest  height attained by a totally $p$-adic, nonzero, non-root of unity, algebraic number of degree $d$.  For any pair $d$ and $p$, we know $\tau_{d,p} < \infty$ since we can construct a Newton Polygon for an irreducible polynomial of degree $d$ that splits completely over $\Q_p$ \cite{cassels1986local}.

In this paper, we develop tools to determine $\tau_{3,p}$ for all $p \geq 5$.    
In Section \ref{section1}, we develop and prove an algorithm to determine $\tau_{3,p}$ for a given prime $p$, which we implement in Section \ref{section2}. All code was written for SageMath, Version $8.2$ \cite{sagemath}, and is included within Section \ref{section2}. A table of results can be found in Section \ref{results}, and Section \ref{conclusion} describes future areas of interest.

\section{The Algorithm}\label{section1}
In Section \ref{abel}, we prove that $\tau_{3,p} \leq 0.70376$ for all $p \geq 5$. To do so, we establish that for every prime $p$, there is a cubic polynomial with an abelian Galois group that splits completely over $\Q_p$. 
By the height-length bound \cite[Proposition $1.6.7$]{bombierigubler}, a list of all cubic polynomials with length less than $68$ will contain all irreducible, non-cyclotomic, cubic polynomials with roots of height less than $0.70376$.
By the Northcott property there are only finitely many such polynomials, and thus we have a finite list to check for $\tau_{3,p}$ and our algorithm will terminate. 

In Section \ref{cardano}, we use the method of Cardano to determine the roots of a cubic polynomial. In Sections \ref{1mod3} and \ref{2mod3}, we establish criteria to determine if those roots are in $\Q_p$. The criteria are different depending if $p \equiv 1 \pmod 3$ or $p \equiv 2 \pmod 3$, since $\Q_p$ contains a primitive cube root of unity if and only if $p \equiv 1 \pmod 3$. In Section \ref{section2}, we implement the algorithm, the results of which can be found in Section \ref{results}.

\subsection{Establishing Termination}\label{abel}
To establish that our algorithm will terminate, we create a finite list of polynomials, and verify that for each prime, there must be a polynomial in our list that will split completely over $\Q_p$.

Let $f_\alpha$ denote the minimal polynomial of $\alpha$.
Then $h(\alpha) = \frac{1}{3} \log  M(f_\alpha)$, where $M(f_\alpha)$ is the Mahler measure of $f_\alpha$. Thus, if $M(f_\alpha) \leq 8.5$, then $h(\alpha) \leq 0.71335$.
The function \textbf{mahler\_measure\_cubic} calculates the Mahler measure of the cubic polynomial $f(x) = ax^3+bx^2+cx+d$.  
\begin{small}
\begin{Verbatim}[frame=single]
def mahler_measure_cubic(a,b,c,d):
    M = a
    Poly = a*x^3 + b*x^2 + c*x + d
    Roots = Poly.roots(CC)
    for i in [0..len(Roots)-1]:
        M = M * max(1,abs(Roots[i][0]))
    return M.n(digits=10)
\end{Verbatim}
\end{small}

For $f(x) = \sum_{i=0}^d a_i x^i,$  the \textbf{length} of $f$ is $L(f) = \sum_{i=0}^d |a_i|$. The length will be useful to us since for any polynomial $f$, 
\[L(f) \leq 2^d M(f),\]  
where $d = \deg f$ \cite[Proposition $1.6.7$]{bombierigubler}.
Thus, the following program generates a list of all cubic polynomials with 
\[L(f) \leq 2^3 (8.5) = 68\]
and remove any polynomial that is either reducible or has Mahler measure greater than $8.5$.
We use the built-in Sage function \textbf{is\_irreducible()} to determine if a polynomial is irreducible over $\Q$.

In addition to the polynomial and Mahler measure, the list also stores the coefficients of the cubic in its so-called depressed form ($x^3+Ax+B$), the discriminant of the polynomial, and the height of the roots.  For more information on depressing a cubic, please see Section 2.2.

The command \textbf{sorted()} will reorganize the array in ascending order of the first value - in this case it will sort by Mahler measure, which is equivalent to sorting by height.
The output of this program is $26796$ polynomials that is saved as the file \textbf{irred\_polynomials\_L68}. Runtime was $124$ minutes.

\begin{small}
\begin{Verbatim}[frame=single]
R.<x> = QQ[]
Polynomials=[]
L=68
for a in [1..L]:
    for b in [-L+abs(a)..L-abs(a)]:
        for c in [-L+abs(a)+abs(b)..L-abs(a)-abs(b)]:
            for d in [-L+abs(a)+abs(b)+abs(c)..L-abs(a)-abs(b)-abs(c)]:
                Poly = a*x^3 + b*x^2 + c*x + d
                if Poly.is_irreducible()==True:
                    MM = mahler_measure_cubic(a,b,c,d)
                    A = (3*a*c - b^2 ) / (3*a^2 )
                    B = (27*a^2*d - 9*a*b*c + 2*b^3 ) / (27*a^3 )
                    Delta = B^2 + 4 * A^3 / 27
                    h = 1/3 * log(MM);
                    if MM <= L/8:
                        Polynomials.append([MM,a,b,c,d,A,B,Delta,h])
Polynomials=sorted(Polynomials)
\end{Verbatim}
\end{small}

Next, we remove from this list all polynomials with non-abelian Galois group.
In general, the Galois group of a polynomial $f(x) \in \Z[x]$ of degree $d$ is isomorphic to a subgroup of $A_d$ if and only if the discriminant of $f$ is a square in $\Q$ \cite[Theorem $1.3$]{ConradGGCQ}.  In the case of $f$ cubic, the Galois group of $f$ is $A_3$, and thus abelian, if and only if the discriminant of $f$ is a square in $\Q$.

Let $K$ be the number field created by adjoining the roots of $f$ to $\Q$ and let $\Delta$ be the discriminant of $K$. By the Kronecker-Weber Theorem, $K$ must be contained within a cyclotomic extension of $\Q$.   Let $m$ be the conductor of $K$, meaning the smallest $m$ such that $K$ is a subfield of $\Q(\zeta_m)$, where $\zeta_m$ is a primitive $m^\text{th}$ root of unity. 
To calculate the conductor, we turn to a special case of the Hasse Conductor-Discriminant formula, as follows.

\begin{thm}\cite[Theorem 6]{MR1545136}\label{Hasse}
Let $K$ be an abelian extension of $\Q$, with $[K:\Q]=3$ and discriminant $\Delta$.  Let $p_1, p_2, \dots, p_n$ be all the primes (aside from $3$) that divide $\Delta$.  If $3$ divides $\Delta$, then the conductor of $K$ is  $9 p_1 p_2\dots p_n$.  If $3$ not does divide $\Delta$, then the conductor of $K$ is $p_1 p_2 \dots p_n$.
\end{thm}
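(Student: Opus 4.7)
The plan is to apply the conductor--discriminant formula for abelian number fields and then carry out a local classification of cubic Dirichlet characters, deducing the shape of the conductor of $K$ from the primes dividing $\Delta$.

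First, recall the conductor--discriminant formula: for an abelian extension $K/\Q$ with Galois group $G$,
\[\lvert \Delta \rvert = \prod_{\chi \in \widehat{G}} f(\chi),\]
where $\widehat{G}$ is the character group of $G$ and $f(\chi)$ is the Dirichlet conductor of $\chi$ under Kronecker--Weber. Since $[K:\Q]=3$, the group $G$ is cyclic of order three, so $\widehat{G}=\{1,\chi,\bar\chi\}$ with $f(1)=1$ and $f(\chi)=f(\bar\chi)=m$, where $m$ is the conductor of $K$. Therefore $\Delta=m^2$ (positive, since cyclic cubic fields are totally real), and in particular the rational primes dividing $\Delta$ coincide with those dividing $m$.

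Next I would carry out a local analysis of which prime powers $p^k$ can be the conductor of a cubic character, i.e.\ when $(\Z/p^k\Z)^\times$ admits an order-$3$ character that does not factor through $(\Z/p^{k-1}\Z)^\times$. For $p=2$, the group $(\Z/2^k\Z)^\times$ is a $2$-group, so no cubic character exists and $2\nmid m$. For odd $p\neq 3$, one has $(\Z/p^k\Z)^\times$ cyclic of order $p^{k-1}(p-1)$, and its $3$-primary part lies entirely in the prime-to-$p$ cyclic factor; hence cubic characters exist if and only if $p\equiv 1\pmod 3$, and every such character factors through $(\Z/p\Z)^\times$. Thus the $p$-part of $m$ is either $1$ or $p$. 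For $p=3$, the group $(\Z/3\Z)^\times$ has order $2$ and carries no cubic character, whereas $(\Z/9\Z)^\times$ has order $6$ and carries two cubic characters. Using $(\Z/3^k\Z)^\times\cong\Z/2\Z\times\Z/3^{k-1}\Z$ for $k\ge 2$, any character of order $3$ is trivial on the prime-to-$3$ factor and factors through the quotient of order $3$ in the cyclic factor, hence through $(\Z/9\Z)^\times$. Consequently the $3$-part of $m$ is either $1$ or exactly $9$.

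Combining these local facts, $m$ is squarefree away from $3$ and supported on primes $\equiv 1\pmod 3$, with a possible extra factor of $9$. Since $\Delta=m^2$, the primes dividing $\Delta$ are exactly the primes dividing $m$, and $3\mid\Delta$ if and only if $9\mid m$, in which case the $3$-part of $m$ is exactly $9$. The two cases in the theorem correspond precisely to these two possibilities, yielding $m=p_1p_2\cdots p_n$ when $3\nmid\Delta$ and $m=9p_1p_2\cdots p_n$ when $3\mid\Delta$. The main obstacle is the local computation at $p=3$, specifically ruling out cubic characters of conductor $27$ or higher; this reduces to the standard structure theorem for $(\Z/3^k\Z)^\times$ and the observation that the order-$3$ subgroup of characters sits entirely in the quotient of order $3$.
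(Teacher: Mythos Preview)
Your argument is correct. The paper does not supply its own proof of this theorem; it is stated with a citation to Hasse and then used as input to the subsequent computation of conductors. So there is nothing to compare your approach against on the paper's side.

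For what it is worth, your route via the conductor--discriminant formula $|\Delta|=\prod_{\chi}f(\chi)=m^2$ together with the local classification of cubic Dirichlet characters is the standard modern way to recover Hasse's statement, and every step you outline is sound: the $2$-local and $p$-local (for $p\neq 3$) analyses are immediate from cyclicity of $(\Z/p^k\Z)^\times$, and your handling of $p=3$ correctly identifies that the unique index-$3$ subgroup of the cyclic group $(\Z/3^k\Z)^\times$ contains the kernel of reduction to $(\Z/9\Z)^\times$, forcing the $3$-part of the conductor to be exactly $9$ when nontrivial. The observation that cyclic cubic fields are totally real, hence $\Delta>0$, cleanly disposes of the sign issue.
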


The following program begins by identifying if each cubic polynomial has an abelian Galois group. If so, then the program calculates the discriminant of $K$ (the number field obtained by adjoining the roots of $f$ to $\Q$)
by applying the built-in function \textbf{absolute\_discriminant()}.
It then applies Theorem \ref{Hasse} and uses the built in Sage command \textbf{factor()} to determine the conductor of $K$.  All of this output is stored in the array \textbf{AbelianCubics}, which contains the information for $156$ polynomials.
\begin{small}
\begin{Verbatim}[frame=single]
Polynomials=load('irred_polynomials_L68')
L=len(Polynomials)
AbelianCubics=[]

for i in [0..L-1]:
    Poly = Polynomials[i];
    a = Poly[1];
    b = Poly[2];
    c = Poly[3];
    d = Poly[4];
    D = b^2*c^2 - 4*a*c^3 - 4*b^3*d - 27*a^2*d^2 + 18*a*b*c*d;
    
    if D.is_square()==True:
        K.<j> = NumberField(a*x^3 + b*x^2 + c*x + d)
        DD = K.absolute_discriminant()
        MM = Poly[0];
        h = Poly[8];
        Factors = DD.factor()
        list_of_factors = list(Factors)
        L = len(list_of_factors)
        Cond = 1
        
        for i in [0..L-1]:
            Cond = Cond*list_of_factors[i][0]
            if list_of_factors[i][0]==3:
                Cond = Cond*3
        C = Cond
        AbelianCubics.append([h, a*x^3 + b*x^2 + c*x + d ,DD,C]);
\end{Verbatim}
\end{small}

The following lemma is well known, but for lack of a convenient reference, we provide a proof.

\begin{lem}\label{frogger}
Let $\alpha \in \Q(\zeta_n)$ have minimal polynomial $f_\alpha \in \Z[x]$, and let
\[G_\alpha = \{ [i] \in (\Z/n\Z)^\times \mid \sigma_i(\alpha)=\alpha\},\]
where $\sigma_i(\zeta_n) = \zeta_n^i$.
Thus $G_\alpha$ is the subgroup of $(\Z/n\Z)^\times$ corresponding to \\
$\Gal(\Q(\zeta_n)/\Q(\alpha))$ via the isomorphism $(\Z/n\Z)^\times \cong \Gal(\Q(\zeta_{n}) / \Q)   $.
 Let $p \nmid n$ be a prime.  Then $f_\alpha$ splits completely in $\Q_p$ if and only if $[p] \in G_\alpha$.
\end{lem}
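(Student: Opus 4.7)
The plan is to translate the statement into class field theory (specifically, the splitting behavior of primes in abelian extensions of $\Q$) and to use the fact that under the isomorphism $(\Z/n\Z)^\times \cong \Gal(\Q(\zeta_n)/\Q)$, the class $[p]$ corresponds precisely to the Frobenius element at $p$ when $p \nmid n$.

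First I would observe that since $\alpha \in \Q(\zeta_n)$ and $\Q(\zeta_n)/\Q$ is abelian, the intermediate field $\Q(\alpha)/\Q$ is automatically Galois, hence all the roots of $f_\alpha$ lie in $\Q(\zeta_n)$. This reduces the problem from a statement about roots of $f_\alpha$ to a statement about the single field $\Q(\alpha)$: the polynomial $f_\alpha$ splits completely in $\Q_p$ if and only if the prime $p$ splits completely in $\Q(\alpha)$. (One direction is immediate; for the converse, complete splitting of $p$ in the Galois extension $\Q(\alpha)/\Q$ forces every embedding $\Q(\alpha) \hookrightarrow \Qbar_p$ to land in $\Q_p$, hence every root of $f_\alpha$ lies in $\Q_p$.)

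Next, because $p \nmid n$, the prime $p$ is unramified in $\Q(\zeta_n)$, and therefore also in the subextension $\Q(\alpha)$. In the cyclotomic setting, the Frobenius automorphism at $p$ is precisely $\sigma_p \colon \zeta_n \mapsto \zeta_n^p$, i.e., the element of $\Gal(\Q(\zeta_n)/\Q)$ corresponding to $[p] \in (\Z/n\Z)^\times$ under the stated isomorphism. This is the key input, and it is where the hypothesis $p \nmid n$ is used.

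Finally, for a Galois extension unramified at $p$, the prime $p$ splits completely if and only if its Frobenius is trivial. By functoriality of Frobenius under restriction, the Frobenius of $p$ in $\Gal(\Q(\alpha)/\Q)$ is $\sigma_p|_{\Q(\alpha)}$. This restriction is trivial exactly when $\sigma_p$ fixes $\Q(\alpha)$, i.e., $\sigma_p \in \Gal(\Q(\zeta_n)/\Q(\alpha)) = G_\alpha$, i.e., $[p] \in G_\alpha$. Chaining together: $f_\alpha$ splits completely in $\Q_p \Longleftrightarrow p$ splits completely in $\Q(\alpha) \Longleftrightarrow \sigma_p|_{\Q(\alpha)} = \id \Longleftrightarrow [p] \in G_\alpha$.

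I expect no serious obstacle: the argument is essentially bookkeeping around two standard facts (the Frobenius in a cyclotomic extension is $[p]$, and splitting completely in an unramified Galois extension is equivalent to trivial Frobenius). The subtlest point to record carefully is the equivalence between $f_\alpha$ splitting in $\Q_p$ and $p$ splitting in $\Q(\alpha)$, which relies on the Galois property of $\Q(\alpha)/\Q$ inherited from abelianness of $\Q(\zeta_n)/\Q$.
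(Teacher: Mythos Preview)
Your proposal is correct and follows essentially the same route as the paper's proof: both use that $\Q(\alpha)/\Q$ is Galois (inherited from the abelian cyclotomic extension), identify the Frobenius at $p$ with $\sigma_p$, and invoke the equivalence between complete splitting of $p$ in a Galois extension and triviality of Frobenius. The paper spells out the local-degree step ($e(\fp_i/p)f(\fp_i/p)=1$, hence $\Q(\alpha)_{\fp_i}=\Q_p$) where you summarize the equivalence ``$f_\alpha$ splits in $\Q_p \Leftrightarrow p$ splits in $\Q(\alpha)$'', but the content is identical.
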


\begin{proof}
The automorphism $\sigma_p\in \Gal(\Q(\zeta_n)/\Q)$ satisfies $\sigma_p(x)\equiv x\pmod{p}$ for all $x\in\Z[\zeta_n]$ \cite[Lemma 4.51]{BakerANT}.  Since $\Q(\zeta_n)/\Q$ is an abelian extension, $\Q(\alpha)/\Q$ is a Galois extension and therefore $\sigma_p$ restricts to an automorphism $\sigma_p\in\Gal(\Q(\alpha)/\Q)$; the above congruence implies that $\sigma_p$ is the Frobenius element of $\Gal(\Q(\alpha)/\Q)$ associated to the prime $p$.

If $[p]\in G_\alpha$, then $\sigma_p$ is the identity element of $\Gal(\Q(\alpha)/\Q)$, which implies that $p$ splits completely in 
$\Q(\alpha)$ \cite[Proposition 4.36]{BakerANT}; 
that is $p\Ocal_{\Q(\alpha)}=\fp_1\dots\fp_d$, 
where $d=[\Q(\alpha):\Q]$.  It follows that each local degree $e(\fp_i/p)f(\fp_i/p)=[\Q(\alpha)_{\fp_i}:\Q_p]$ is equal to $1$ \cite[Theorem 5.25]{BakerANT},
 which means that $\Q(\alpha)_{\fp_i}=\Q_p$ for $i=1,2,\dots,d$.  In particular, $\Q(\alpha)\subseteq\Q_p$, and therefore as $\Q(\alpha)/\Q$ is Galois, all $d$ of the Galois conjugates of $\alpha$ are in $\Q_p$ as well.  
 Hence $f_\alpha(x)$ splits completely in $\Q_p$.  The converse follows from a straightforward reversal of this argument.
\end{proof}

For each polynomial $f_{\alpha}$ in \textbf{AbelianCubics}, we want to determine the congruence classes modulo $m$ of a prime $p$ for $f_{\alpha}$ to split completely in $\Q_p$, where $m$ is the conductor of the splitting field of $f_\alpha$.  
The following code goes through each line in the array \textbf{AbelianCubics}, and for each polynomial $f_\alpha$ in
the list, computes the set $B_\alpha \subseteq (Z/mZ)^\times$ so
that $f_\alpha$ splits completely in $\Q_p$ if and only if $[p] \in B_\alpha$, where $[p]$
denotes the residue of $p \pmod m$. 

Note that if $(\Z/m\Z)^\times$ has a unique index $3$ subgroup, then this group must be $G_\alpha$.  
In the case that $(\Z/m\Z)^\times$ does not have a unique index $3$ subgroup, we check the first $50$ primes to determine if there is a root in $\Q_p$ via Hensel's Lemma.  
When a root of $f_\alpha$ is determined to be in $\Q_p$, we know that for all primes $q$ with $q \equiv p \pmod m$, $f_\alpha$ must split completely in $\Q_p$, by Lemma \ref{frogger}.  Further, we know there are $|(\Z/m\Z)^\times|/3$  congruence classes for which $f_\alpha$ splits completely in $\Q_p$.  Thus, after testing the first $50$ primes, the code checks the cardinality of the set of congruences to ensure all were found. For this particular list of polynomials, $50$ is sufficient to identify the index $3$ subgroup.

\begin{small}
\begin{Verbatim}[frame=single]
AbelianCubics=load('AbelianCubics')
L=len(AbelianCubics);
P = Primes();

for i in [0..L-1]:
    Poly = AbelianCubics[i][1]
    PolyList = Poly.list()
    a = PolyList[3]
    b = PolyList[2]
    c = PolyList[1]
    d = PolyList[0]
    Cond = AbelianCubics[i][3]
    v = [1];    
    for j in [0..50]:
        for k in [1..P[j]-1]:
            M = Integer( a*k^3 + b*k^2 + c*k + d )
            M = M%P[j]
            N = Integer( 3*a*k^2 + 2*b*k + c )
            N = N%P[j]
            if M==0 and N>0:
                v.append(P[j]%Cond)               
    V = sorted(v)
    V = set(V)
\end{Verbatim}
\end{small}

The results of this code are included as a supplement to this paper in a file called  \textbf{Degree3Table.pdf}. A sampling of the data is included here for reference.
\renewcommand*{\arraystretch}{1.2}
\begin{center}
\begin{table}[h]
\begin{tabular}{|c|c|l|}
\hline
$f_\alpha$ & $h(\alpha) $ & $\alpha$ is totally $p$-adic if and only if\\
\hline
$x^3-x^2-2x+1$ & $0.26986 $ & $p \equiv 1, 6 \pmod 7$\\
\hline
$x^3-3x^2+1$ & $ 0.35252 $ & $ p \equiv 1,8 \pmod 9$\\
\hline
$ 3x^3 -4  x^2-5  x +3   $ & $ 0.60981 $ & $p \equiv  1,3,8,9, 11,  20,23, 24,   27, 28, 33, 34, 37, 38,$\\
& & $  41, 50, 52, 53, 58 ,  60   \pmod {61 }$\\
\hline
$ 3x^3 -  x^2 -8 x+3    $ & $ 0.69106 $ & $p \equiv   1, 3, 7, 8, 9, 10, 17, 21, 22, 24, 27, 30, 43,$\\
& & $ 46, 49,  51, 52, 56, 63, 64, 65, 66, 70, 72   \pmod {73 }$\\
\hline
$ 2x^3 -9  x^2+3  x+2    $ & $ 0.69903 $ & $p \equiv    1, 2, 4, 8,  16, 31, 32,47,55, 59, 61, 62 \pmod { 63}$\\
\hline
$ x^3   -9x^2 +6 x+1    $ & $ 0.70376 $ & $p \equiv  1, 5,  8, 11,23, 25,38, 40, 52, 55,  58 ,62  \pmod { 63}$\\
\hline
\end{tabular}
\caption{\label{mytable}}
\end{table}
\end{center}
\newpage
\begin{thm}\label{N3}
Let $p$ be a prime.  Then $\tau_{3,p} \leq 0.70376$.
\end{thm}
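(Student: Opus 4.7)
The plan is to exhibit, for each prime $p$, a specific cubic polynomial from the computational list of Section \ref{abel} whose roots have height at most $0.70376$ and which splits completely over $\Q_p$. By the height-length bound, the file \textbf{AbelianCubics} contains every irreducible cubic with abelian Galois group whose root has height at most $\tfrac{1}{3}\log(8.5)$, and Lemma \ref{frogger} reduces the question of splitting over $\Q_p$ to a congruence condition on $p$ modulo the conductor $m_\alpha$ of the splitting field of $f_\alpha$.

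First I would pass to the sublist of polynomials with $h(\alpha)\le 0.70376$ and record, for each, the explicit set of residue classes modulo $m_\alpha$ on which $f_\alpha$ splits completely over $\Q_p$; this is precisely the data produced by the code culminating in Table \ref{mytable}. Next I would let $M$ be the least common multiple of the conductors appearing in this sublist and, via the Chinese Remainder Theorem, pull back each splitting condition to a union of residue classes in $(\Z/M\Z)^\times$. The theorem then reduces to the finite claim that the union of these pullbacks exhausts $(\Z/M\Z)^\times$, a check that can be carried out by a short loop over $(\Z/M\Z)^\times$. The finitely many primes dividing $M$ (in particular the small primes $p=2,3$) are then handled by direct inspection, exhibiting a suitable splitting polynomial from the list for each.

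The main obstacle is not any single conceptual difficulty but rather the completeness of the covering: if even one residue class in $(\Z/M\Z)^\times$ were missed, then by Dirichlet's theorem on primes in arithmetic progression there would exist infinitely many primes $p$ for which no polynomial of height at most $0.70376$ in the list splits completely over $\Q_p$, and the bound would fail. The substantive content of the proof is therefore the exhaustive computational verification that the union of the tabulated splitting classes coincides with $(\Z/M\Z)^\times$, together with the case-by-case handling of the finitely many primes dividing $M$.
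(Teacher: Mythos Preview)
Your approach is correct and follows the same underlying strategy as the paper: restrict to abelian cubics, invoke Lemma~\ref{frogger} so that splitting becomes a congruence condition on $p$, and then verify that the residue classes covered by polynomials of height at most $0.70376$ exhaust the relevant unit group. The difference is one of economy. You propose taking $M$ to be the least common multiple of \emph{all} conductors appearing in the sublist and checking coverage of $(\Z/M\Z)^\times$ by a loop; this $M$ is large (cf.\ the number $N_3$ in Section~\ref{conclusion}), so the verification, while finite, is genuinely computational. The paper instead observes that just four polynomials from Table~\ref{mytable}---with conductors $7$, $9$, $63$, $63$---already suffice, so the entire covering argument takes place in $(\Z/63\Z)^\times$ and can be checked by hand in a few lines; the primes $3$ and $7$ dividing $63$ are dispatched with two further explicit polynomials. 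Your method has the virtue of being mechanical and requiring no cleverness in selecting the covering set; the paper's buys a short, essentially hand-checkable proof. One small imprecision: if a residue class were missed, you would only conclude that no \emph{abelian} cubic of the required height splits there, not that $\tau_{3,p}>0.70376$; but since you are proving the positive statement this does not affect the argument.
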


\begin{proof}  For a prime $p$, denote $\tau_{3,p}^\ab$ be the smallest nontrivial height of an abelian, cubic, totally $p$-adic number. Note that $\tau_{3,p} \leq \tau_{3,p}^\ab$. Thus, if we show that $\tau_{3,p}^\ab \leq 0.70376$, we have proven the theorem.

Based on the results 
from Table \ref{mytable}, we know
\[\tau_{3,3}^\ab \leq 0.609817669, \text{ and }\]
\[\tau_{3,7}^\ab \leq 0.501878627.\]
All primes $p \neq 3, 7$, when reduced modulo $63$, are contained in $(\Z/63\Z)^\times$.  Observe that 
\[\left(\Z/63\Z\right)^\times = \{1,2,4,5,8,10,11,13,16,17,19,20,22, 23,25, 26, 29, 31, 32, 34, 37,\]
\[  38, 40, 41, 43, 44, 46, 47,50, 52, 53, 55, 58, 59, 61, 62\}.  \]
Further, we observe that 
\[ \tau_{3,p}^\ab \leq 
\begin{cases}
0.269862305 & \text{ if } p \equiv 1, 6 \pmod 7, \\
 0.352525605 & \text{ if } p \equiv 1, 8 \pmod 9 \\
\end{cases}
  \]
  Thus
  \[\tau_{3,p}^\ab \leq 0.269862305 \text { for } p \equiv 1, 8, 13, 20, 22, 29, 34, 41, 43, 50 , 55, 62 \pmod {63}, \text{ and } \]
    \[\tau_{3,p}^\ab \leq 0.352525605 \text { for } p \equiv 10, 17, 19, 26, 37, 44, 46, 53 \pmod {63}. \]
It remains to determine an upper bound on $\tau_{3,p}^\ab$ for 
\[ p \equiv 2,4,5,11,16, 23,25,  31, 32,  38, 40,  47,  52, 58, 59, 61 \pmod {63}.\]
Note that each of the above numbers falls into one of the following two sets:
\[p \equiv   1, 2, 4, 8, 16,  31,32, 47, 55, 59, 61, 62   \pmod { 63}\]
\[p \equiv  1, 5,  8, 11,23, 25,38, 40, 52,   55 ,58, 62   \pmod { 63}.\]
Further, we observe that by the last two lines of Table \ref{mytable}, given any prime $p$, one of the polynomials in the table must split completely over $\Q_p$.  
\end{proof}

\subsection{Determining Roots of Cubic Polynomials}\label{cardano}
In \emph{Ars Magna}, Cardano describes a method to find the roots of a cubic polynomial $f$ as elements of $\C$ \cite{cardano1968ars}.  This method is analogous to completing the square for a quadratic polynomial.    
We use Cardano's method to determine if a cubic polynomial in $K[y]$ splits completely over $K$, where $K$ is an arbitrary field of characteristic not equal to $2$ or $3$.
Beginning with an arbitrary cubic polynomial in $K[y]$,
\begin{equation*}
g(y) = ay^3+by^2+cy+d
\end{equation*}
we divide through by the leading coefficient
and perform a change of variables $y=x-\tfrac{b}{3}$ to eliminate the quadratic term, yielding a
monic depressed cubic polynomial with coefficients in $K$,
\[f(x) = x^3+Ax+B.\]
Note that since the transformations to depress the cubic simply shift the roots by $\tfrac{b}{3a}$, so $g$ splits over $K$ if and only if $f$ splits over $K$.

\begin{mylem}[Cardano]\cite{cardano1968ars}\label{roots}
Let $L$ be an algebraically closed field of characteristic not equal to $2$ or $3$, and let $\zeta$ be a primitive cube root of unity in $L$.  Let $f(x) = x^3+Ax+B \in L[x]$, and let $\Delta = B^2+4A^3/27$. 
If $A=0$, let $C=-B$, and if $A \neq 0$, let $C$ be either square root of $\Delta$ in $L$.
Let $u$ be a cube root of $\frac{-B+C}{2}$ and let $v = -\frac{A}{3u}$.  Then the roots of $f$ are $u+v, \zeta u + \zeta^2 v$, and $ \zeta^2 u + \zeta v.$
\end{mylem}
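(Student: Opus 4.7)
The plan is to substitute the proposed roots into $f$ and verify directly that the result is zero, treating the two cases $A=0$ and $A \neq 0$ separately. Throughout, the two key algebraic identities driving the verification are $uv = -A/3$ (by the definition of $v$ when $A\neq 0$, with the convention that $v=0$ when $A=0$) and $u^3 + v^3 = -B$.

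First, I dispose of the case $A = 0$. Then $f(x) = x^3 + B$, and with $C = -B$ the quantity $u$ is a cube root of $-B$ while $v = 0$. The three proposed roots become $u,\ \zeta u,\ \zeta^2 u$, which are precisely the three cube roots of $-B$ in $L$.

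For $A \neq 0$, the first task is to check that $u$ is nonzero so $v$ is well defined: if $u = 0$ then $(-B+C)/2 = 0$, so $C = B$, and then $C^2 = B^2 = \Delta$ forces $A^3/27 = 0$, contradicting $A\neq 0$. So $u\neq 0$, and the relation $v = -A/(3u)$ gives $uv = -A/3$ and $v^3 = -A^3/(27u^3)$. The central computation is to show $u^3 + v^3 = -B$. Using $u^3 = (-B+C)/2$, I would clear denominators to reduce this identity to
\[
(-B+C)^2 - \tfrac{4A^3}{27} \;=\; -2B(-B+C),
\]
and then substitute $C^2 = B^2 + 4A^3/27$ on the left side; the $4A^3/27$ terms cancel and both sides collapse to $2B^2 - 2BC$.

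Having established $uv = -A/3$ and $u^3 + v^3 = -B$, I apply the standard identity $(u+v)^3 = u^3 + v^3 + 3uv(u+v)$ to obtain
\[
(u+v)^3 + A(u+v) + B \;=\; (u^3+v^3) + B + \bigl(3uv + A\bigr)(u+v) \;=\; 0,
\]
so $u+v$ is a root of $f$. The same calculation works verbatim for $\zeta u + \zeta^2 v$ and $\zeta^2 u + \zeta v$, because $\zeta^3 = 1$ gives $(\zeta u)(\zeta^2 v) = uv = -A/3$ and $(\zeta u)^3 + (\zeta^2 v)^3 = u^3 + v^3 = -B$, and analogously with the roles of $\zeta$ and $\zeta^2$ swapped. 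Since $f$ has degree $3$, it has at most three roots in $L$ counted with multiplicity, so these three values exhaust the root set. No step here is subtle; the only place requiring care is the clearing-of-denominators computation that uses the defining relation $C^2 = \Delta$, which is the engine of the proof.
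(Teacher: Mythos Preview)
The paper does not supply its own proof of this lemma; it is stated with a citation to Cardano's \emph{Ars Magna} and used as a black box. Your direct verification is correct and is the standard modern argument: establish $uv=-A/3$ and $u^3+v^3=-B$, then plug each proposed root into $f$ using the identity $(s+t)^3=s^3+t^3+3st(s+t)$.

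One small remark on rigor: your final sentence (``these three values exhaust the root set'') is not quite justified by what precedes it. Showing that three elements are roots of a degree-$3$ polynomial does not by itself say they are \emph{all} the roots with multiplicity when some of them might coincide. The clean fix is to check the elementary symmetric functions directly: the three proposed roots sum to $(1+\zeta+\zeta^2)(u+v)=0$, their pairwise products sum to $-3uv=A$, and their product is $(u+v)(u^2-uv+v^2)=u^3+v^3=-B$, so in fact
\[
f(x)=\bigl(x-(u+v)\bigr)\bigl(x-(\zeta u+\zeta^2 v)\bigr)\bigl(x-(\zeta^2 u+\zeta v)\bigr),
\]
which settles the multiplicity issue without any case analysis. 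In the paper's applications $f$ is always irreducible and hence separable, so the point is moot there, but since the lemma is stated for arbitrary $f$ it is worth closing this gap.
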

  
To determine when a cubic polynomial $f(x) \in \Q_p[x]$ splits completely over $\Q_p$, the method will depend on whether $\Q_p$ contains a primitive cube root of unity, which happens exactly when $p \equiv 1 \pmod 3$.  Thus, we consider two cases: $p \equiv 1 \pmod 3$ and $p \equiv 2 \pmod 3$.  

\subsection{Case 1}\label{1mod3}
Suppose $p \equiv 1 \pmod 3$.

\begin{thm}\label{onemodthreethm} 
Let $K$ be a field of characteristic not equal to $2$ or $3$, let $L$ be an algebraic closure of $K$, and assume that $K$ contains a primitive cube root of unity, $\zeta$.  Let $f(x) = x^3+Ax+B \in K[x]$, and $\Delta = B^2+4A^3/27$.  If $A=0$, let $C=-B$, and if $A\neq 0 $, let $C$ be either square root of $\Delta$ in $L$.
  Then $f$ splits completely over $K$ if and only if
\begin{itemize}
\item[\textbf{(a)}] $\Delta$ is a square in $K$, and 
\item[\textbf{(b)}] $\frac{-B+C}{2}$ is a cube in $K$.
\end{itemize}
\end{thm}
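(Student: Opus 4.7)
The plan is to extract both directions from Cardano's formula (Lemma \ref{roots}), using a short Lagrange-resolvent computation for the reverse direction and handling the degenerate case $A=0$ separately. For ($\Leftarrow$), I would assume (a) and (b). When $A\neq 0$, hypothesis (a) places $C$ in $K$, and (b) produces a cube root $u\in K$ of $(-B+C)/2$; then $v=-A/(3u)\in K$ (with $u\neq 0$, since $(-B+C)/2\neq 0$ whenever $A\neq 0$), and Lemma \ref{roots} exhibits the three roots $u+v,\ \zeta u+\zeta^2 v,\ \zeta^2 u+\zeta v$, all of which lie in $K$ because $\zeta\in K$. When $A=0$, we have $C=-B$, and (b) says $-B$ is a cube in $K$; taking any such cube root $u$ and $v=0$ gives the three roots $u,\zeta u,\zeta^2 u\in K$.

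For ($\Rightarrow$), suppose $f$ splits over $K$ with roots $r_1,r_2,r_3\in K$. The discriminant $\disc(f)=-4A^3-27B^2=-27\Delta$ is a square in $K$ because $\disc(f)=\prod_{i<j}(r_i-r_j)^2$, and the identity $(\zeta-\zeta^2)^2=-3$ makes $-27=\bigl(3(\zeta-\zeta^2)\bigr)^2$ a square in $K$; dividing these two squares yields (a). To verify (b) when $A\neq 0$, I would form the Lagrange resolvents $u=\tfrac{1}{3}(r_1+\zeta^2 r_2+\zeta r_3)$ and $v=\tfrac{1}{3}(r_1+\zeta r_2+\zeta^2 r_3)$, both of which lie in $K$. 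A direct symmetric-function calculation using $1+\zeta+\zeta^2=0$ together with Vi\`ete's identities $\sum r_i=0$, $\sum_{i<j}r_ir_j=A$, and $r_1r_2r_3=-B$ yields $uv=-A/3$ and $u^3+v^3=-B$; hence $u^3$ and $v^3$ are the two roots of $T^2+BT-A^3/27=0$, which are $(-B\pm C)/2$ for some square root $C$ of $\Delta$. Thus both $(-B+C)/2$ and $(-B-C)/2$ are cubes in $K$, so (b) holds regardless of which square root of $\Delta$ one designates as $C$. The case $A=0$ is immediate: $\Delta=B^2$ is a square, and any $K$-rational root of $x^3+B$ is itself a cube root of $-B$.

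The only genuine work is the symmetric-function evaluation of $uv$ and $u^3+v^3$; I expect this to be the most calculation-heavy step, though it is entirely routine once one expands and applies $1+\zeta+\zeta^2=0$.
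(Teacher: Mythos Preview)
Your argument is correct. The forward direction ($\Leftarrow$) matches the paper's, but your converse is organized differently: the paper fixes an abstract cube root $u$ of $\tfrac{-B+C}{2}$ in the algebraic closure, observes via Cardano that the three roots of $f$ are $u+v,\ \zeta u+\zeta^2 v,\ \zeta^2 u+\zeta v$, and then uses a short Galois/linear-algebra argument (the matrix $\left(\begin{smallmatrix}1&1\\ \zeta&\zeta^2\end{smallmatrix}\right)$ is invertible) to show every $\sigma\in\Gal(L/K)$ fixes $u$, whence $u\in K$ and $C=2u^3+B\in K$. You instead go the explicit route, building the Lagrange resolvents directly from the $K$-rational roots and reading off (a) from the discriminant identity together with $-27=(3(\zeta-\zeta^2))^2$. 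Your approach is more elementary---it never invokes the Galois correspondence---and has the pleasant by-product of showing simultaneously that \emph{both} $\tfrac{-B\pm C}{2}$ are cubes in $K$, so the ``either square root'' clause in the statement is visibly harmless. The paper's approach trades the symmetric-function bookkeeping for a two-line invertibility check, and its structure ports more directly to the companion case $\zeta\notin K$ (Theorem~\ref{twomodthreethm}), where one must track which automorphisms fix $u$ and which swap $u$ and $v$.
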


\begin{proof}
Suppose $A=0$.  Then $\Delta = B^2$ is a square in $K$, so \textbf{(a)} is true.
Additionally, $C=-B$ and $f(x) =x^3+B$, which splits completely over $K$ if and only if $-B$ is a cube in $K$, which happens exactly when \textbf{(b)} holds.

Now suppose $A \neq 0$.  Let $u$ be a cube root of $\frac{-B+C}{2}$ and let $v = -\frac{A}{3u}$.
Let $F$ be a Galois extension of $K$ containing $C$ and $u$.

Suppose the conditions \textbf{(a)} and \textbf{(b)} are met.
By Lemma \ref{roots}, the roots of $f$ are $u+v,  \zeta u + \zeta^2 v, $ and $\zeta^2 u + \zeta v$
and thus $f$ splits completely over $K$.

Conversely, suppose that $f$ splits completely over $K$.  Let $\sigma \in \Gal(L/K)$.
Since $\sigma$ fixes $u+v$ and $\zeta u+\zeta^2 v$,
\begin{equation}\label{lin}
u+v = \sigma(u)+\sigma(v), \text{ and } 
\zeta u + \zeta^2 v = \zeta \sigma(u) + \zeta^2 \sigma(v).
\end{equation}
Note that 
$ \left( \begin{smallmatrix}1 & 1 \\ \zeta & \zeta^2\end{smallmatrix} \right)$ has a non-zero determinant and thus 
\begin{equation}\label{blah}
 \begin{pmatrix}1&1 \\ \zeta & \zeta^2\end{pmatrix} \begin{pmatrix} x \\ y \end{pmatrix} =  \begin{pmatrix}\sigma(u)+\sigma(v) \\ \zeta \sigma(u) + \zeta^2 \sigma(v)\end{pmatrix} 
 \end{equation}
 has a unique solution.  By (\ref{lin}), $x=u, y=v$ is a solution to (\ref{blah}) and $x=\sigma(u)$, $y = \sigma(v)$ is a solution to (\ref{blah}) as well.  Therefore $u = \sigma(u)$.  
 By the Galois correspondence, $u \in K$, and thus \textbf{(b)} holds.  Thus $u^3 = \frac{-B+C}{2} \in K$.  Since $C=2u^3+B$, $C \in K$ and therefore $\Delta = B^2+4A^3/27 = C^2$ is a square in $K$, and \textbf{(a)} is true.
\end{proof}

\begin{lem}\label{lem2}
Let $p$ be a prime, $p \neq 3$, and 
let $a \in \Z_p$ with $|a|_p = 1$.  
Then $a$ is a cube in $\Q_p$ if and only if $a \pmod p$ is a cube in $\Z_p/p\Z_p$.
\end{lem}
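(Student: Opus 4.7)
The plan is a direct application of Hensel's Lemma to the polynomial $f(x)=x^3-a$, once we have verified that the hypotheses are met in each direction.

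First I would dispatch the easy ``only if'' direction. If $a=b^3$ for some $b\in\Q_p$, then $v_p(a)=3v_p(b)$; since $v_p(a)=0$, we get $v_p(b)=0$, so $b\in\Z_p$ with $|b|_p=1$. Reducing modulo $p$ shows that $a\pmod p$ is the cube of $b\pmod p$ in $\Z_p/p\Z_p$.

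For the ``if'' direction, suppose $a\equiv b_0^3\pmod p$ for some $b_0\in\Z_p$. Since $|a|_p=1$, we may (and do) choose $b_0$ with $|b_0|_p=1$, i.e.\ $b_0$ a unit in $\Z_p$. Consider $f(x)=x^3-a\in\Z_p[x]$. Then $|f(b_0)|_p\le 1/p$ by construction, while $f'(b_0)=3b_0^2$. Here the hypothesis $p\neq 3$ enters crucially: it forces $|3|_p=1$, and combined with $|b_0|_p=1$ this yields $|f'(b_0)|_p=1$. Hence
\[ |f(b_0)|_p < |f'(b_0)|_p^2, \]
and Hensel's Lemma produces a unique $b\in\Z_p$ with $f(b)=0$ and $b\equiv b_0\pmod p$. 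This $b$ satisfies $b^3=a$, exhibiting $a$ as a cube in $\Q_p$ (in fact in $\Z_p$).

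There is no real obstacle here; the only point deserving mention is the role of the hypothesis $p\neq 3$. Without it, $f'(b_0)=3b_0^2$ could fail to be a unit, the Hensel inequality would collapse, and the statement would actually be false (cubing is not a bijection on $\Z_3^\times$ modulo $3$ in the naive sense, since ramification in the cube-root extension occurs at $3$). With $p\neq 3$ the argument is immediate.
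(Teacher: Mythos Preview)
Your proof is correct and follows essentially the same approach as the paper's: both directions are handled identically, with the ``if'' direction reduced to a straightforward application of Hensel's Lemma to $f(x)=x^3-a$ at an approximate root $b_0$, using $p\neq 3$ to ensure $|f'(b_0)|_p=1$. Your ``only if'' direction is actually slightly more careful than the paper's, since you explicitly argue via valuations that a cube root of $a$ in $\Q_p$ must already lie in $\Z_p^\times$ before reducing modulo $p$.
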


\begin{proof}
Suppose that $a$ is a cube in $\Z_p$.  Then $a$ is a cube in $\Z_p / p \Z_p$ by the nature of quotient rings.  

Conversely, suppose $a_0$ is a cube in $\Z/p\Z$ where $a_0 \equiv a \pmod p$, and let $b_0 \in \Z/p\Z$ satisfy $b_0^3 \equiv a_0 \pmod p$.
Let $f(x) = x^3-a$.  Note that $p \nmid 3, b_0$. 
By the strong triangle inequality, 
\begin{align*}
\left|f(b_0)\right|_p &= \left|b_0^3 - a \right|_p \\
&\leq  \max \left\{\left|b_0^3 - a_0\right|_p, \left| a_0- a \right|_p\right\} \\ 
&\leq \tfrac{1}{p}.
\end{align*}
Further,
\[
\left|f'(b_0)\right|_p = \left|3b_0^2 \right|_p = 1.
\]
By Hensel's Lemma, $a$ is a cube in $\Q_p$.
\end{proof}

\begin{thm}\label{Thm1mod3} Let $p$ be a prime, with $p \equiv 1 \pmod 3$.  Then the following algorithm yields $\tau_{3,p}$. 
\begin{itemize}
\item[\textbf{(1)}] Create a list, in ascending order of Mahler measure, of all irreducible, non-cyclotomic cubic polynomials in $\Z[x]$ with Mahler measure bounded above by $8.5$.   Let $f(x)$ be the first polynomial on the list. 
\item[\textbf{(2)}] Convert $f(x)$ into depressed form $g(x) = x^3+ Ax+B$ and let $\Delta =B^2+4A^3/27 $.  
\item[\textbf{(3)}] If $\Delta$ is not a square in $\Q_p$, return to step {\bf (2)} with the next polynomial on the list.
\item[\textbf{(4)}] If $A=0$, let $C=-B$, and otherwise let $C$ be a square root of $\Delta$ in $\Q_p$.  If $\frac{-B+C}{2}$ is not a cube in $\Q_p$, return to step {\bf (2)} with the next polynomial on the list.  Otherwise, terminate, $\tau_{3,p} = \tfrac{1}{3} \log M(f)$.  
\end{itemize}
\end{thm}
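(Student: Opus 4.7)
The plan is to prove two things: the algorithm terminates, and when it does, its output equals $\tau_{3,p}$. The main ingredients are Theorem \ref{N3}, the length--Mahler bound, and the splitting criterion of Theorem \ref{onemodthreethm}.

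For termination, I would first apply Theorem \ref{N3} to obtain a totally $p$-adic algebraic number $\alpha$ of degree $3$ with $h(\alpha) \leq 0.70376$, so its primitive minimal polynomial $f_\alpha \in \Z[x]$ satisfies $M(f_\alpha) = e^{3h(\alpha)} \leq e^{3(0.70376)} < 8.5$. The length--Mahler inequality then yields $L(f_\alpha) \leq 2^3 M(f_\alpha) < 68$, so $f_\alpha$ appears on the list constructed in step \textbf{(1)}. Theorem \ref{onemodthreethm} ensures that when the algorithm processes the depressed form of $f_\alpha$, the discriminant $\Delta$ is a square in $\Q_p$ and $\tfrac{-B+C}{2}$ is a cube in $\Q_p$, so the algorithm accepts $f_\alpha$ (if it has not already halted on some earlier polynomial).

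For correctness, let $f$ be the polynomial on which the algorithm terminates. By Theorem \ref{onemodthreethm}, $f$ splits completely in $\Q_p$, so its roots are totally $p$-adic; since $f$ is irreducible over $\Q$ of degree $3$, its roots are nonzero and not roots of unity (there is no cyclotomic polynomial of degree $3$, as $\phi(n)=3$ has no solution). This yields $\tau_{3,p} \leq \tfrac{1}{3}\log M(f)$. Conversely, suppose some totally $p$-adic, nonzero, non-root-of-unity $\beta$ of degree $3$ satisfies $h(\beta) < \tfrac{1}{3}\log M(f)$; then $M(f_\beta) < M(f) \leq 8.5$, so the length bound again places $f_\beta$ on the list, and because the list is sorted by ascending Mahler measure, $f_\beta$ precedes $f$. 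By Theorem \ref{onemodthreethm}, $f_\beta$ would pass the splitting test, contradicting the choice of $f$ as the first polynomial accepted. Hence $\tau_{3,p} = \tfrac{1}{3}\log M(f)$.

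The only real subtlety is the bookkeeping around the threshold $8.5$: it must be simultaneously large enough, via Theorem \ref{N3}, to capture every minimal polynomial realizing $\tau_{3,p}$, and small enough that the length bound $L(f) \leq 68$ yields a finite, enumerable list. Once this is in place, correctness reduces to a direct invocation of Theorem \ref{onemodthreethm}, and the decidability of the tests in steps \textbf{(3)} and \textbf{(4)} reduces further to standard square- and cube-testing in $\Q_p$ via Hensel's lemma (cf.\ Lemma \ref{lem2}).
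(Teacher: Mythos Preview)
Your proposal is correct and follows essentially the same approach as the paper: termination via Theorem \ref{N3} and the length--Mahler bound, and correctness via Theorem \ref{onemodthreethm}. Your write-up is in fact more explicit than the paper's on the correctness half (the paper merely asserts that steps \textbf{(3)} and \textbf{(4)} detect complete splitting and leaves the minimality argument implicit), and your observation that $\phi(n)=3$ has no solution is a nice way to note that the ``non-cyclotomic'' restriction is vacuous in degree $3$.
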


\begin{proof}
Since $\tau_{3,p} \leq \tau_{3,p}^\ab$, by Theorem \ref{N3} we know that $\tau_{3,p} \leq 0.70376$.  By \cite[Proposition $1.6.7$]{bombierigubler}, a list of all polynomials with length less than $68$ will contain all irreducible, non-cyclotomic, cubic polynomials with Mahler measure bounded above by $8.5$.
Any degree $3$ algebraic number of height less than or equal to $0.70376$ will be a root of a polynomial in the list.  Thus, this algorithm will always terminate successfully.

Let $f$ be the polynomial being considered.
By Theorem \ref{onemodthreethm}, steps \textbf{(3)} and \textbf{(4)} will detect exactly when $f$ splits completely over $\Q_p$.
\end{proof}
\subsection{Case 2}\label{2mod3}
Suppose $p \equiv 2 \pmod 3$.
\begin{thm}\label{twomodthreethm} 
Let $K$ be a field of characteristic not equal to $2$ or $3$, $K'$ be an algebraic closure of $K$, $\zeta$ be a primitive cube root of unity in $K'$, and assume that $\zeta \notin K$.  Let $f(x) = x^3+Ax+B \in K[x]$ with $B \neq 0$ and let $\Delta = B^2+4A^3/27$.  If $A=0$, let $C=-B$, and if $A \neq 0$, let $C$ be either square root of $\Delta$ in $K'$.
Then $f$ splits completely over $K$ if and only if
\begin{itemize}
\item[\textbf{(a)}] $\Delta$ is a square in $K(\zeta)$ and not a square in $K$, and 
\item[\textbf{(b)}] $\frac{-B+C}{2}$ is a cube in $K(\zeta)$ and not a cube in $K$.
\end{itemize}
\end{thm}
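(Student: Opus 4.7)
My plan is to mirror the proof of Theorem \ref{onemodthreethm} and then perform a Galois descent of the Cardano roots from $K(\zeta)$ down to $K$, using the additional ``not in $K$'' halves of \textbf{(a)} and \textbf{(b)}. Since $\zeta \notin K$, the extension $K(\zeta)/K$ has degree $2$; write $\sigma$ for its nontrivial element, so $\sigma(\zeta) = \zeta^2$. I will tacitly assume $\Delta \neq 0$, equivalently that $f$ is separable, which is automatic in the paper's intended application to irreducible cubics. One may also assume $A \neq 0$, since otherwise $\Delta = B^2$ is a square in $K$ and \textbf{(a)} fails (and in that case $f$ cannot split over $K$ either, as its roots would be $\zeta$-multiples of a cube root of $-B$).

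For the $(\Leftarrow)$ direction, assume \textbf{(a)} and \textbf{(b)}. By \textbf{(a)}, $C \in K(\zeta) \setminus K$ and so $\sigma(C) = -C$. By \textbf{(b)}, we may choose $u \in K(\zeta)$ with $u^3 = \tfrac{-B+C}{2}$; set $v = -A/(3u)$. Applying Theorem \ref{onemodthreethm} over $K(\zeta)$ shows that $f$ splits over $K(\zeta)$ with Cardano roots $u+v,\ \zeta u + \zeta^2 v,\ \zeta^2 u + \zeta v$, and the task reduces to showing $\sigma$ fixes each. The key calculation is: $\sigma(u)^3 = \sigma(u^3) = \tfrac{-B-C}{2} = v^3$, so $\sigma(u) = \zeta^k v$ for some $k \in \{0,1,2\}$; then $\sigma(u)\sigma(v) = \sigma(uv) = uv = -A/3$ forces $\sigma(v) = \zeta^{-k} u$, and iterating yields $\sigma^2(u) = \zeta^{-2k} u$. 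Since $\sigma$ has order $2$, this requires $\zeta^{-2k} = 1$, hence $k = 0$. Thus $\sigma$ simply swaps $u$ and $v$, and a direct verification shows each of the three Cardano roots is $\sigma$-invariant and therefore lies in $K$.

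For the $(\Rightarrow)$ direction, assume $f$ splits over $K$. Then $f$ splits over $K(\zeta)$ as well, and Theorem \ref{onemodthreethm} delivers at once the ``square in $K(\zeta)$'' and ``cube in $K(\zeta)$'' halves of \textbf{(a)} and \textbf{(b)}. For the ``not in $K$'' halves, $\disc(f) = -27\Delta$ is a nonzero square in $K$, being the square of the product of differences of the three distinct $K$-rational roots; hence $-3\Delta \in K^{\times 2}$. Since $\zeta \notin K$ forces $-3 \notin K^{\times 2}$, we conclude $\Delta \notin K^{\times 2}$, giving the second half of \textbf{(a)}. Consequently $C \in K(\zeta) \setminus K$, and thus $\tfrac{-B+C}{2} \in K(\zeta) \setminus K$; since every cube in $K$ lies in $K$, this yields the second half of \textbf{(b)}.

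The main obstacle is the descent step in the $(\Leftarrow)$ direction: a priori $\sigma$ could act on the three Cardano roots as a nontrivial transposition, producing exactly one $K$-rational root and a conjugate pair outside $K$. What rules this out is the combination of \textbf{(a)} (which fixes $\sigma(C) = -C$), \textbf{(b)} (which provides $u$), and the order-$2$ constraint $\sigma^2 = \id$; together these force $\sigma(u) = v$ exactly, which is precisely the step where the ``not in $K$'' clauses earn their keep.
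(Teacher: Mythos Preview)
Your proof is correct, and both directions take a somewhat different route from the paper's.

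For $(\Rightarrow)$, the paper works by hand: it picks $\sigma\in\Gal(L/K(\zeta))$ and uses the linear independence of $(1,1)$ and $(\zeta,\zeta^2)$ to conclude $\sigma(u)=u$, hence $u\in K(\zeta)$; then it picks $\tau\in\Gal(L/K)$ swapping $\zeta,\zeta^2$ and shows $\tau(u)=v\neq u$, hence $u\notin K$; finally it rules out $\Delta\in K^{\times 2}$ by a degree argument ($u^3\in K$ would force $[K(u):K]\mid 3$, contradicting $[K(u):K]=2$). You instead invoke Theorem~\ref{onemodthreethm} over $K(\zeta)$ as a black box for the ``in $K(\zeta)$'' halves, and then use the discriminant identity $\disc(f)=-27\Delta$ together with $-3\notin K^{\times 2}$ to get $\Delta\notin K^{\times 2}$ directly. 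This is shorter and more conceptual; the paper's argument is more self-contained.

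For $(\Leftarrow)$, both proofs must show $\sigma(u)=v$ (not $\zeta v$ or $\zeta^2 v$). The paper eliminates the bad cases by computing $\sigma(u+v)$ in each and observing that $\zeta(u+v)$ and $\zeta^2(u+v)$ are not among the Cardano roots. Your argument is cleaner: from $uv=-A/3\in K$ you get $\sigma(v)=\zeta^{-k}u$, and then $\sigma^2(u)=\zeta^{-2k}u=u$ forces $k=0$ purely from the order of $\sigma$, with no need to match against the root list.

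One small expository point: in your closing paragraph you credit both ``not in $K$'' clauses for the descent, but in fact only \textbf{(a)}'s clause (giving $\sigma(C)=-C$) does any work there; the ``not a cube in $K$'' half of \textbf{(b)} is automatic once \textbf{(a)} holds, since $C\notin K$ already forces $\tfrac{-B+C}{2}\notin K$.
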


\begin{proof} 
Let $u$ be a cube root of $\tfrac{-B+C}{2}$ and let $v=\tfrac{-A}{3u}$.
By Lemma \ref{roots}, the roots of $f$ are $u+v,  \zeta u + \zeta^2 v, $ and $ \zeta^2 u + \zeta v.$

We first suppose $f$ splits completely in $K$.  Let $L$ be a Galois extension of $K$ that contains $u$ and $\zeta$.
Let $\sigma \in \Gal(L/K(\zeta))$.    We want to show that $\sigma$ must fix $u$.  Since we are assuming that $f$ splits completely over $K$, $\sigma$ must fix $u+v,  \zeta u + \zeta^2 v, $ and $ \zeta^2 u + \zeta v,$
\begin{equation}\label{eqn1}
u+v = \sigma(u)+\sigma(v), 
\end{equation}
\begin{equation}\label{eqn3}
\zeta^2 u + \zeta v = \zeta^2 \sigma(u) + \zeta \sigma(v)  .
\end{equation}
By multiplying $(\ref{eqn1})$ by $\zeta$ and subtracting $(\ref{eqn3})$, we obtain
\begin{equation}\label{eqn5}
(\zeta-\zeta^2) u = (\zeta-\zeta^2) \sigma(u),
\end{equation}
so $\sigma(u) = u$ because $\zeta \neq \zeta^2$.
Thus, since all elements in $\Gal(L/K(\zeta))$ fix $u$, $u$ must be in $ K(\zeta)$. 

It remains show $u \notin K$.  Let $\tau \in \Gal(L/K)$ be such that $\tau$ interchanges $\zeta$ and $\zeta^2$.  We now show that $\tau$ does not fix $u$.
Since the roots of $f$ must all be fixed by $\tau$,
\begin{equation}\label{eqn7}
\zeta u + \zeta^2 v = \zeta^2 \tau(u) + \zeta \tau(v) ,
\end{equation}
\begin{equation}\label{eqn8}
\zeta^2 u + \zeta v = \zeta \tau(u) + \zeta^2 \tau(v).
\end{equation}
By multiplying $(\ref{eqn8})$ by $\zeta$, and subtracting $(\ref{eqn7})$, we obtain
\begin{equation}\label{eqn9}
(1- \zeta) u = (1-\zeta)\tau(v)
\end{equation}
and note that $\tau(v)=u$, so $\tau$ does not fix $u$.  Thus $u \notin K$ and \textbf{(b)} holds.  

Further, $u \in K(\zeta)$, so $u^3 = \tfrac{-B+C}{2} \in K(\zeta)$, and thus $\Delta$ is a square in $K(\zeta)$ since $C \in K(\zeta)$.
Since $K(u)$ is contained within $K(\zeta)$, a quadratic extension of $K$, and $u \notin K$,  it follows that $[K(u):K] = 2$.  For sake of contradiction, suppose $\Delta$ is a square in $K$.  Then $u^3 \in K$, so $[K(u):K]=3$ which is not true.  Thus $\Delta$ is not a square in $K$, and \textbf{(a)} holds.

Conversely, suppose that \textbf{(a)} and \textbf{(b)} are true.  
Note that if $A=0$, then $\Delta$ is a square in $K$, contradicting \textbf{(a)}.
Thus, $A \neq 0$.
Let $\sigma$ denote the non trivial element of Gal$(K(\zeta)/K)$.  Since $\zeta$ and $\zeta^2$ share a degree $2$ minimal polynomial, $\sigma$ must permute $\zeta$ and $\zeta^2$.

By \textbf{(a)} and \textbf{(b)}, $u, u^3 \notin K$ and $u, u^3 \in K(\zeta)$.  Since $u^3$ and $ v^3$ are the roots of $r(z) = z^2+Bz-\frac{A^3}{27}$, we have
$\sigma(u)^3 = \sigma(u^3) = v^3$.  Therefore, either $\sigma(u)=v$, $\sigma(u) = \zeta v$,  or $\sigma(u)= \zeta^2 v$.  

We will now show that $\sigma(u)=v$ by eliminating the other two options by way of contradiction.  We rely on the fact that elements of the Galois group send roots of $f$ to roots of $f$, and that $\sigma^2(u) = u$.
If $ \sigma(u) = \zeta v$, then $u = \zeta^2 \sigma(v)$, and $\sigma(u+v) = \sigma(u) + \sigma(v) = \zeta v +\zeta u$.
Since $\zeta v + \zeta u$ is not a root of $f$, $ \sigma(u) \neq \zeta v$.
If $ \sigma(u) = \zeta^2 v$, then $u = \zeta \sigma(v)$, and $\sigma(u+v) = \zeta^2 u + \zeta^2 v$.  Since $\zeta^2 u + \zeta^2 v$ is not a root of $f$, $ \sigma(u) \neq \zeta^2 v$.

Therefore, $\sigma(u) = v$ and $\sigma(v)=u$.  Thus
\begin{align*}
 \sigma(u+v) &= \sigma(u)+\sigma(v)= v + u ,\\
\sigma(\zeta u + \zeta^2 v)&= \sigma(\zeta u) + \sigma(\zeta^2 v) = \zeta^2 v + \zeta u ,\\
 \sigma(\zeta^2 u + \zeta v) &= \sigma( \zeta^2 u) + \sigma( \zeta v) = \zeta v + \zeta^2 v.
\end{align*}
Since $\sigma$ fixes the roots of $f$, $f$ splits completely in $K$. 
\end{proof}
Let $p \equiv 2 \pmod 3$. 
The third cyclotomic polynomial, $\Phi_3(x) = x^2+x+1$, has discriminant $-3$ and is the minimal polynomial for $\zeta$. 
Since $-3$ is not a square in $\Q_p$, $\Phi_3(x)$
 is irreducible over $\Q_p$, and thus $\Q_p$ does not contain a primitive cube root of unity.  
There are exactly three quadratic extensions of $\Q_p$: $\Q_p(\sqrt{p}), \Q_p(\sqrt{-3}), $ and $\Q_p(\sqrt{-3p})$. 
 Let $K = \Q_p(\sqrt{-3}) = \Q_p(\zeta)$, the unique unramified quadratic extension of $\Q_p$.
The $p$-adic absolute value on $\Q_p$ extends uniquely to $\Q_p(\sqrt{-3})$ by 
\[|a+b\sqrt{-3}|_p =\left| N_{K/\Q_p}(a+b\sqrt{-3})\right|_p^{1/2} = \left| a^2+3b^2\right|_p^{1/2}.\]
The following three lemmas summarize some basic facts about this field.
\begin{lem}\label{froggy}
Let $p \equiv 2 \pmod 3$, and $K = \Q_p(\sqrt{-3})$.  For $x \in K^\times$, $|x|_p \in p^\Z$.
\end{lem}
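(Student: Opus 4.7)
The plan is to work directly from the formula
\[|a+b\sqrt{-3}|_p = |a^2+3b^2|_p^{1/2}\]
supplied in the paragraph preceding the lemma. It is enough to show that the normalized $p$-adic valuation $v_p(a^2+3b^2)$ is an \emph{even} integer whenever $(a,b)\in\Q_p^2\setminus\{(0,0)\}$; for then $|x|_p = p^{-v_p(a^2+3b^2)/2}\in p^{\Z}$. The single arithmetic input I need is that $-3$ is a non-square in $\Q_p$ when $p\equiv 2\pmod 3$, which is precisely the statement that $\Phi_3(x)=x^2+x+1$ is irreducible over $\Q_p$---this is recorded in the paragraph just above the lemma.

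Given this, I would split into two cases according to $\alpha := v_p(a)$ and $\beta := v_p(b)$, with the convention $v_p(0)=\infty$. If $\alpha\neq\beta$, then $v_p(a^2)=2\alpha$ and $v_p(3b^2)=2\beta$ are distinct integers (note $v_p(3)=0$ since $p\neq 3$), so the strong triangle equality gives $v_p(a^2+3b^2)=2\min(\alpha,\beta)$, which is even. If instead $\alpha=\beta<\infty$, write $a=p^\alpha a'$ and $b=p^\alpha b'$ with $a',b'\in\Z_p^\times$, so that $a^2+3b^2 = p^{2\alpha}(a'^{\,2}+3b'^{\,2})$. A congruence $a'^{\,2}+3b'^{\,2}\equiv 0\pmod p$ would give $-3\equiv (a'/b')^2\pmod p$ and exhibit $-3$ as a square modulo $p$, contradicting the hypothesis $p\equiv 2\pmod 3$; hence $v_p(a^2+3b^2)=2\alpha$, again even. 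The remaining case $\alpha=\beta=\infty$ corresponds to $x=0$, which is excluded from $K^\times$.

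Conceptually the lemma is simply the statement that $K/\Q_p$ is the unramified quadratic extension, so I anticipate no serious obstacle: the argument is a routine unramifiedness check, and the only delicate point is the mod-$p$ non-squareness of $-3$, which is the very fact that places us in the case $p\equiv 2\pmod 3$ in the first place.
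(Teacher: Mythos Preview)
Your proof is correct and follows essentially the same route as the paper's: both split into the cases $v_p(a)\neq v_p(b)$ (handled by the ultrametric inequality) and $v_p(a)=v_p(b)$ (handled by factoring out the common power of $p$ and invoking that $-3$ is a non-square modulo $p$). The only difference is cosmetic---you phrase things additively via $v_p$ while the paper works multiplicatively with $|\cdot|_p$.
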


\begin{proof}
Let $x = a+b\sqrt{-3}$, with $a, b \in \Q_p$ and $x \neq 0$.  Suppose $|a|_p \neq |b|_p$.  Then 
\[|x|_p = \left| a^2+3b^2\right|_p^{1/2} = \max\{|a|_p, |b|_p\} \in p^\Z.\]
Suppose instead that $|a|_p = |b|_p = p^\ell$.  Set $a_0 = p^\ell a$ and $b_0 = p^\ell b$.  Note that since $|a_0|_p=|b_0|_p=1$, we have $|a_0|_p, |b_0|_p \in p^\Z$.
Thus, 
\[ |a_0^2+3b_0^2|_p \leq \max \{1, |3|_p \} \leq 1.\]

Suppose, for the sake of contradiction, that $|a_0^2+3b_0^2|_p < 1$. 
Then we have that $a_0^2+3b_0^2 \equiv 0 \pmod p$, which is a contradiction since $-3$ is not a quadratic residue modulo $p$.
Thus
\[|x|_p = \left| a^2+3b^2\right|_p^{1/2} 
= \left| p^{-2\ell}(a_0^2+3b_0^2)\right|_p^{1/2} 
= p^{\ell}|a_0^2+3b_0^2|_p^{1/2} = p^{\ell}\in p^\Z. \qedhere\]
\end{proof}

\begin{lem}  \label{iscubeink}
Let $p$ be a prime with $p \equiv 2 \pmod 3$, $K = \Q_p (\sqrt{-3})$, and $C \in K$.  Let $k \in \N$, $p \nmid k$.  Then $f(x) = x^k-C$ has a root in $K$ if and only if
\begin{itemize}
\item[\textbf{(a)}] $|C|_p = p^{k\ell}$ for some $l \in \Z$, and 
\item[\textbf{(b)}] $p^{k\ell}C \pmod p$ is a $k^{\mathrm{th}}$ power in $\Z_p[\sqrt{-3}]/(p)$.
\end{itemize}
\end{lem}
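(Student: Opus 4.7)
The plan is a Hensel's lemma argument carried out in the unramified quadratic extension $K = \Q_p(\sqrt{-3})$, whose ring of integers $\Ocal_K$ is a complete discrete valuation ring with uniformizer $p$ and residue field $\F_{p^2} \cong \Z_p[\sqrt{-3}]/(p)$. The structural parallel to Lemma \ref{lem2} is exact, with $\Q_p$ replaced by $K$ and cubes replaced by $k$th powers.

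For the forward direction, I would suppose $\alpha \in K^\times$ satisfies $\alpha^k = C$. Lemma \ref{froggy} forces $|\alpha|_p = p^\ell$ for some $\ell \in \Z$, hence $|C|_p = p^{k\ell}$, giving \textbf{(a)}. Setting $u = p^\ell \alpha$ then produces a unit in $\Ocal_K$ with $u^k = p^{k\ell} C$, so reducing modulo $p$ exhibits $p^{k\ell} C$ as a $k$th power in $\F_{p^2}$, giving \textbf{(b)}. For the converse, assume \textbf{(a)} and \textbf{(b)} and set $C_0 := p^{k\ell} C \in \Ocal_K^\times$. By \textbf{(b)}, pick any lift $b_0 \in \Ocal_K$ of a $k$th root of $C_0 \pmod p$ in $\F_{p^2}$; since $C_0$ is a unit, so is $b_0$. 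Applying Hensel's lemma to $g(y) = y^k - C_0 \in \Ocal_K[y]$ using the estimates $|g(b_0)|_p \leq 1/p$ (from $b_0^k \equiv C_0 \pmod p$) and $|g'(b_0)|_p = |k b_0^{k-1}|_p = 1$ (from $p \nmid k$ and $|b_0|_p = 1$) then produces $\beta \in \Ocal_K$ with $\beta^k = C_0$. Finally $\alpha := p^{-\ell} \beta$ satisfies $\alpha^k = C$ in $K$.

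The only real obstacle is confirming that Hensel's lemma applies to the complete non-archimedean field $K$ with its extended absolute value. This is standard once one notes that the hypothesis $p \equiv 2 \pmod 3$ makes $-3$ a non-square in $\Q_p$, so $K/\Q_p$ is an unramified quadratic extension and $p$ remains a uniformizer of $\Ocal_K$; the extended absolute value takes values in $p^\Z$ on $K^\times$ (as reproven in Lemma \ref{froggy}), and the standard formulation of Hensel's lemma over a complete discrete valuation ring applies verbatim. With that observation in place, the calculation is essentially the one already used in the proof of Lemma \ref{lem2}.
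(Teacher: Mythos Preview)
Your proof is correct and follows essentially the same approach as the paper: both directions use Lemma \ref{froggy} to normalize to a unit and then apply Hensel's lemma in $\Ocal_K$ with the estimates $|g(b_0)|_p \leq 1/p$ and $|g'(b_0)|_p = |k b_0^{k-1}|_p = 1$. The only cosmetic difference is that the paper replaces $C$ by $p^{k\ell}C$ at the outset, whereas you carry the auxiliary $C_0$ and undo the scaling at the end.
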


\begin{proof}
First we assume the existence of $ r \in K$ so that $f(r)=0$, and verify that \textbf{(a)} and \textbf{(b)} hold.
By Lemma \ref{froggy}, $|r|_p = p^\ell$ for some $\ell \in \Z$.  Since
\[|C|_p = |r^k|_p= p^{k\ell},\] \textbf{(a)} is true. 
Further, 
\[p^{kl}C = p^{kl}r^k = (p^l r)^k\]
and thus $p^{kl}C$
 is the $k^\mathrm{th}$ power of $p^l r \pmod p$ in $\Z[\sqrt{-3}]$, and therefore also holds after reduction modulo $(p)$.

Conversely, we suppose $C \in \Q_p(\sqrt{-3})$ satisfies conditions \textbf{(a)} and \textbf{(b)}, and show that $C$ is a $k^{\mathrm{th}}$ power in $K$.  Replacing $C$ with $p^{kl}C$, without loss of generality we may assume $|C|_p=1$.  By condition \textbf{(b)}, there exists $a+b\sqrt{-3} \in \Z_p[\sqrt{-3}]/(p)$, where $a, b \in \{0, 1, 2, \cdots, p-1\}$ and  $C \equiv (a+b\sqrt{-3})^k \pmod p$.
Then 
\begin{align*}
 \left| f(a+b\sqrt{-3})  \right|_p &= \left|(a+b\sqrt{-3}) ^k -  C \right|_p \leq \tfrac{1}{p}, \text{ and }\\
 \left| f'(a+b\sqrt{-3})  \right|_p &= \left|  k (a+b\sqrt{-3}) ^{k-1} \right|_p =1.
\end{align*}
Thus, by Hensel's Lemma $f$ has a root in $K$.
\end{proof}

\begin{lem}
Let $p$ be a prime with $p \equiv 2 \pmod 3$, and $K = \Q_p (\sqrt{-3})$.  Let $x \in \Q_p$ be nonzero and the square of an element in $K$.  Then exactly one of the following two cases is true:
\begin{itemize}
\item[\textbf{(a)}] $x = a^2$ for some $a \in \Q_p$, or
\item[\textbf{(b)}] $x=-3b^2$ for some $b \in \Q_p$.
\end{itemize}
\end{lem}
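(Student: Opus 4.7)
The plan is to write the square root explicitly in coordinates with respect to the basis $\{1,\sqrt{-3}\}$ of $K$ over $\Q_p$, extract the real and imaginary parts, and then invoke the fact (established in the preceding discussion, since $\Phi_3$ is irreducible over $\Q_p$ when $p\equiv 2\pmod 3$) that $-3$ is not a square in $\Q_p$.

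More concretely, I would start by writing $x = y^2$ with $y = a + b\sqrt{-3} \in K$, where $a,b \in \Q_p$. Expanding yields
\[
x = (a+b\sqrt{-3})^2 = (a^2 - 3b^2) + 2ab\sqrt{-3}.
\]
Since $x \in \Q_p$ and $\{1,\sqrt{-3}\}$ is a $\Q_p$-basis of $K$, comparing the $\sqrt{-3}$-coefficient forces $2ab = 0$; as $\Char(\Q_p) \neq 2$, this gives $a = 0$ or $b = 0$. In the first case $x = -3b^2$, which is option \textbf{(b)}; in the second case $x = a^2$, which is option \textbf{(a)}. (Note $x\neq 0$ forces whichever of $a,b$ survives to be nonzero.) This proves that at least one of \textbf{(a)} and \textbf{(b)} holds.

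For exclusivity, suppose both held simultaneously: $x = a^2 = -3b^2$ with $a,b \in \Q_p$ nonzero. Then $(a/b)^2 = -3$, so $-3$ would be a square in $\Q_p$; equivalently, $\Phi_3(x) = x^2 + x + 1$ would have a root in $\Q_p$, so $\zeta \in \Q_p$, contradicting the hypothesis $p \equiv 2 \pmod 3$ (as recalled in the paragraph immediately preceding Lemma \ref{froggy}). Hence at most one of \textbf{(a)} and \textbf{(b)} can hold.

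There is essentially no hard step here: the content of the lemma is just that $K/\Q_p$ is a quadratic extension of the right shape, so squares of purely imaginary elements are $-3$ times a $\Q_p$-square and squares of real elements are $\Q_p$-squares, while mixed elements fail to land in $\Q_p$. The only point one must be careful about is the mutual exclusivity, which rests on the standing assumption $p \equiv 2 \pmod 3$ guaranteeing that $-3 \notin (\Q_p^\times)^2$.
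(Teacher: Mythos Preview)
Your proof is correct and follows essentially the same approach as the paper: write $x=(a+b\sqrt{-3})^2$, expand, and use $\sqrt{-3}\notin\Q_p$ to force $ab=0$. In fact your argument is slightly more complete, since the paper's proof only verifies that at least one of \textbf{(a)} or \textbf{(b)} holds, whereas you also supply the exclusivity argument (if both held then $-3$ would be a square in $\Q_p$) needed to justify the word ``exactly'' in the statement.
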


\begin{proof}
Suppose $x=(a+b\sqrt{-3})^2$ for $a, b \in \Q_p$.  Then 
$x =a^2-3b^2+2ab\sqrt{-3}$.
Since $\sqrt{-3} \notin \Q_p$, we have $ab=0$.  If $a=0$, then $x=-3b^2$ and \textbf{(b)} holds.  If $b=0$, then $x=a^2$ and \textbf{(a)} holds.
\end{proof}

The previous lemma gives us the machinery to detect and solve for a square root in $K$, since $x$ is a square in $K$ and not in $\Q_p$ if and only if $\frac{x}{-3} = b^2$ for some $b \in \Q_p$.  

\begin{thm} Let $p$ be an odd prime, with $p \equiv 2 \pmod 3$.  Then the following algorithm yields $\tau_{3,p}$.  
\begin{itemize}
\item[\textbf{(1)}] Create a list, in ascending order of Mahler measure, of all irreducible, non-cyclotomic cubic polynomials in $\Z[x]$ with Mahler measure less than $8.5$.  Let $f(x)$ be the first polynomial on the list.  
\item[\textbf{(2)}] Convert $f(x)$ into depressed form $g(x) = x^3+ Ax+B$ and let $\Delta =B^2+4A^3/27 $.  
\item[\textbf{(3)}] If $\Delta$ is a square in $\Q_p$ or is not a square in $\Q_p(\sqrt{-3})$, return to step {\bf (2)} with the next polynomial on the list.
\item[\textbf{(4)}] If $A=0$, let $C=-B$, and otherwise let $C$ be a square root of $\Delta$ in $\Q_p(\sqrt{-3})$.  If $\frac{-B+C}{2}$ is not a cube in $\Q_p(\sqrt{-3})$, return to step {\bf (2)} with the next polynomial on the list.  
\item[\textbf{(5)}] If $\frac{-B+C}{2}$ is a cube in $\Q_p$, return to step {\bf (2)} with the next polynomial on the list. 
Otherwise, terminate, $\tau_{3,p} = \tfrac{1}{3} \log M(f)$. 
\end{itemize}
\end{thm}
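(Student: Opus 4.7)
The plan is to mirror the proof of Theorem \ref{Thm1mod3}, with Theorem \ref{twomodthreethm} playing the role of Theorem \ref{onemodthreethm}. For termination, Theorem \ref{N3} gives $\tau_{3,p}\le 0.70376$, and the relation $h(\alpha) = \tfrac{1}{3}\log M(f_\alpha)$ then forces any minimal polynomial $f_\alpha$ realizing $\tau_{3,p}$ to satisfy $M(f_\alpha)\le 8.5$. The height-length bound \cite[Proposition $1.6.7$]{bombierigubler} gives $L(f_\alpha) \le 2^3 \cdot 8.5 = 68$, so $f_\alpha$ appears in the list generated in Step \textbf{(1)}.

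For correctness, I would fix the candidate cubic $f$ being tested in Steps \textbf{(3)}--\textbf{(5)} and let $g(x)=x^3+Ax+B$ be its depressed form. Since the substitution $y=x-\tfrac{b}{3a}$ only translates the roots, $g$ splits completely over $\Q_p$ if and only if $f$ does. Before invoking Theorem \ref{twomodthreethm} with $K=\Q_p$, two preliminary observations are required: first, $B\neq 0$, since $B=0$ would make $g(x)=x(x^2+A)$ reducible and hence $f$ reducible too, contradicting our assumption; and second, $p\equiv 2\pmod 3$ implies $\zeta\notin\Q_p$, as noted immediately before the statement, so $\Q_p(\zeta) = \Q_p(\sqrt{-3})$. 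With these in hand, Theorem \ref{twomodthreethm} says $g$ splits completely over $\Q_p$ if and only if $\Delta$ is a square in $\Q_p(\sqrt{-3})$ but not in $\Q_p$, and $\tfrac{-B+C}{2}$ is a cube in $\Q_p(\sqrt{-3})$ but not in $\Q_p$. Step \textbf{(3)} rejects $f$ precisely when the first condition fails, while Steps \textbf{(4)} and \textbf{(5)} jointly enforce the second. Since the list is sorted by Mahler measure (equivalently by height), the first $f$ to survive all three checks realizes $\tau_{3,p}$, with $\tau_{3,p}=\tfrac{1}{3}\log M(f)$.

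The main point requiring care, rather than a real obstacle, is algorithmic effectiveness: checking ``square in $\Q_p(\sqrt{-3})\setminus\Q_p$'' reduces to testing whether $\Delta/(-3)$ is a square in $\Q_p$, by the lemma following Lemma \ref{iscubeink}; and checking ``cube in $\Q_p(\sqrt{-3})$'' or ``cube in $\Q_p$'' is handled directly by Lemma \ref{iscubeink} with $k=3$. I would also note that the choice of square root $C$ in Step \textbf{(4)} is immaterial, since the two possibilities $\tfrac{-B\pm C}{2}$ have product $-A^3/27 = (-A/3)^3$, so one is a cube in a given field exactly when the other is. With these remarks the proof is a direct application of Theorem \ref{twomodthreethm}, parallel in structure to the $p\equiv 1\pmod 3$ case.
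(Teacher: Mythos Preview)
Your proposal is correct and follows essentially the same approach as the paper: termination via Theorem \ref{N3} and the height-length bound, and correctness of Steps \textbf{(3)}--\textbf{(5)} via Theorem \ref{twomodthreethm}. Your additional remarks (verifying $B\neq 0$ so that Theorem \ref{twomodthreethm} applies, reducing the square test to $\Delta/(-3)$, and noting that the choice of $C$ is immaterial since $\tfrac{-B+C}{2}\cdot\tfrac{-B-C}{2}=(-A/3)^3$) are all sound and fill in details the paper leaves implicit.
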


\begin{proof}
Since $\tau_{3,p} \leq \tau_{3,p}^\ab$, by Theorem \ref{N3} we know that $\tau_{3,p} \leq 0.70376$.  By \cite[Proposition $1.6.7$]{bombierigubler}, a list of all polynomials with length less than $68$ will contain all irreducible, non-cyclotomic, cubic polynomials with Mahler measure bounded above by $8.5$.
Any degree $3$ algebraic number of height less than or equal to $0.70376$ will be a root of a polynomial in the list.  Thus, this algorithm will always terminate successfully.

Let $f$ be the polynomial being considered.
By Theorem \ref{twomodthreethm}, steps \textbf{(3)}, \textbf{(4)}, and \textbf{(5)} will detect exactly when $f$ splits completely over $\Q_p$.
\end{proof}
\newpage
\subsection{Implementation}\label{section2}
The function \textbf{is\_cube\_in\_k} checks to see if $A+B\sqrt{-3}$ is a cube in $K = \Q_p(\sqrt{-3})$ by applying Lemma \ref{iscubeink}. 
\begin{small}

\begin{Verbatim}[frame=single]
def is_cube_in_k(A,B,p):
    A = K(A);
    B = K(B);
    AA = A.list();
    BB = B.list();
    A0 = AA[0];
    B0 = BB[0];    
    if A.abs()<1:
        A0 = 0
    if B.abs()<1:
        B0 = 0
    for c in [0..p-1]:
        for d in [0..p-1]:
            if (c*c*c - 9*c*d*d)%p==A0:
                if (3*c*c*d - 3*d*d*d)%p==B0:
                    return True
    return False
\end{Verbatim}
\end{small}

The function \textbf{is\_cube\_in\_Qp} checks to see if $A$ is a cube in $\Q_p$ by applying Lemma \ref{lem2}.
 \begin{small}

\begin{Verbatim}[frame=single]   
def is_cube_in_Qp(A,p):
    val = A.ordp();
    if 3.divides(val)==True:
        L = A.expansion();
        a = L[0];
        if IsCubeInFp(a,p)==True:
            return True;
    return False
\end{Verbatim}
\end{small}

The function \textbf{tau\_dp\_1mod3} determines $\tau_{3,p}$ for the prime $p$ where $p \equiv 1 \pmod 3$, by implementing the algorithm described in Theorem \ref{Thm1mod3}.  Recall the array \textbf{Polynomials} contains the contents of the file \textbf{irred\_polynomials\_L68}, 
which has $L$ entries. 
These were calculated in Section $2.1$.
\begin{small}

\begin{Verbatim}[frame=single]
def tau_dp_1mod3(p):
    i = 0;
    while i < L-1:
        A = Polynomials[i][5];
        B = Polynomials[i][6];
        D = Polynomials[i][7];
        A = K(A);
        B = K(B);
        D = K(D);        
        if QQ(D).is_padic_square(p)==True:
            if A==0:
                C = -B;
            if A!=0:
                C = D.square_root();
            Check = (C - B) / 2;
            if is_cube_in_Qp(Check,p)==True:
                return Polynomials[i]
        i = i + 1;
    return False
\end{Verbatim}
\end{small}

The function \textbf{tau\_dp\_2mod3} determines $\tau_{3,p}$ for the prime $p$ where $p \equiv 2 \pmod 3$, by implementing the algorithm described in Theorem $12$.  
\begin{small}

\begin{Verbatim}[frame=single]
def tau_dp_2mod3(p):
    i = 0;
    while i < L-1:
        D = Polynomials[i][7];        
        if D.is_padic_square(p)==False:
            b = D / (-3);            
            if b.is_padic_square(p)==True:
                a = - Polynomials[i][6] / 2;
                b = K(b);
                b = sqrt(b) / 2;               
                if is_cube_in_k(a,b,p)==True:
                    return Polynomials[i]
        i=i+1;
    return False
\end{Verbatim}
\end{small}

The following code determines $\tau_{3,p}$ for all primes $p$ greater than $5$, up to and including the $N^\mathrm{th}$ prime. 
\begin{small}

\begin{Verbatim}[frame=single]
Polynomials=load('irred_polynomials_L68')
L=len(Polynomials)
P=Primes(); # P is now a list of all primes
N=25
rows = [['P', '$\tau_{3,p}$', 'Polynomial']]

for i in[2..N]:
    p = P.unrank(i);
    K = Qp(p, prec = 6, type = 'capped-rel', print_mode = 'series');    
    if p%3==1:
        tdp = tau_dp_1mod3(p)
        Poly = tdp[1]*x^3 + tdp[2]*x^2 + tdp[3]*x + tdp[4];
        h = tdp[8].n(digits=5);
        rows.append([p,h,Poly])       
    if p%3==2:
        tdp = tau_dp_2mod3(p)
        Poly = tdp[1]*x^3 + tdp[2]*x^2 + tdp[3]*x +tdp[4];
        h = tdp[8].n(digits=5);
        rows.append([p,h,Poly])
\end{Verbatim}
\end{small}
 
\newpage
\renewcommand*{\arraystretch}{1.0}
\section{Results}\label{results}
The table below contains some values for $\tau_{3,p}$.\\
\vspace{3mm}\\
\begin{minipage}{0.48\textwidth}
\begin{center}
\begin{tabular}{|c|c|c|}
\hline
$p$ & $\tau_{3,p}$ & $f_\alpha$ \\ \hline
$5$ & $0.36620$ & $x^{3} - 2 \, x^{2} - x - 3$ \\ \hline
$7$ & $0.30387$ & $2 \, x^{3} - 2 \, x^{2} + x - 2$ \\ \hline
$11$ & $0.36620$ & $x^{3} - x^{2} - 2 \, x - 3$ \\ \hline
$13$ & $0.26986$ & $x^{3} - 2 \, x^{2} - x + 1$ \\ \hline
$17$ & $0.23105$ & $x^{3} - x^{2} - x + 2$ \\ \hline
$19$ & $0.23105$ & $x^{3} - x^{2} - 2$ \\ \hline
$23$ & $0.23105$ & $x^{3} - x^{2} + x - 2$ \\ \hline
$29$ & $0.26986$ & $x^{3} - 2 \, x^{2} - x + 1$ \\ \hline
$31$ & $0.23105$ & $x^{3} - x - 2$ \\ \hline
$37$ & $0.27319$ & $x^{3} - x^{2} - 2 \, x - 2$ \\ \hline
$41$ & $0.23105$ & $x^{3} - x^{2} + x - 2$ \\ \hline
$43$ & $0.23105$ & $x^{3} - 2$ \\ \hline
$47$ & $0.12741$ & $x^{3} - x^{2} - 1$ \\ \hline
$53$ & $0.20313$ & $x^{3} - x^{2} - x - 1$ \\ \hline
$59$ & $0.093733$ & $x^{3} - x^{2} + 1$ \\ \hline
$61$ & $0.28206$ & $2 \, x^{3} - x^{2} + 2$ \\ \hline
$67$ & $0.12741$ & $x^{3} - x^{2} - 1$ \\ \hline
$71$ & $0.23105$ & $x^{3} - x^{2} - x + 2$ \\ \hline
$73$ & $0.29111$ & $2 \, x^{3} - x^{2} - 2$ \\ \hline
$79$ & $0.28612$ & $x^{3} - 2 \, x^{2} - 2$ \\ \hline
$83$ & $0.23105$ & $x^{3} - 2 \, x - 2$ \\ \hline
$89$ & $0.27535$ & $2 \, x^{3} - 2 \, x^{2} - x + 2$ \\ \hline
$97$ & $0.26986$ & $x^{3} - 2 \, x^{2} - x + 1$ \\ \hline
$101$ & $0.093733$ & $x^{3} - x^{2} + 1$ \\ \hline
$103$ & $0.20313$ & $x^{3} - x^{2} - x - 1$ \\ \hline
$107$ & $0.23105$ & $x^{3} - x - 2$ \\ \hline
$109$ & $0.23105$ & $x^{3} - 2$ \\ \hline
$113$ & $0.23105$ & $x^{3} - x - 2$ \\ \hline
$127$ & $0.23105$ & $x^{3} - x^{2} - 2$ \\ \hline
$131$ & $0.12741$ & $x^{3} - x^{2} - 1$ \\ \hline
$137$ & $0.30697$ & $x^{3} - x^{2} - 3 \, x - 2$ \\ \hline
$139$ & $0.23105$ & $x^{3} - x^{2} - x + 2$ \\ \hline
$149$ & $0.12741$ & $x^{3} - x^{2} - 1$ \\ \hline
$151$ & $0.28206$ & $2 \, x^{3} - x^{2} + 2$ \\ \hline
$157$ & $0.23105$ & $x^{3} - 2 \, x - 2$ \\ \hline
$163$ & $0.20313$ & $x^{3} - x^{2} - x - 1$ \\ \hline
$167$ & $0.093733$ & $x^{3} - x^{2} + 1$ \\ \hline
$173$ & $0.093733$ & $x^{3} - x^{2} + 1$ \\ \hline
$179$ & $0.27319$ & $x^{3} - x^{2} - 2 \, x - 2$ \\ \hline
$181$ & $0.26986$ & $x^{3} - 2 \, x^{2} - x + 1$ \\ \hline
$191$ & $0.23105$ & $x^{3} - x^{2} - 2$ \\ \hline
$193$ & $0.23105$ & $x^{3} - x^{2} + x - 2$ \\ \hline

\end{tabular}
\end{center}
\end{minipage}
\begin{minipage}{0.48\textwidth}
\begin{center}
\begin{tabular}{|c|c|c|}
\hline
$p$ & $\tau_{3,p}$ & $f_\alpha$ \\ \hline
$197$ & $0.23105$ & $x^{3} - x^{2} - x + 2$ \\ \hline
$199$ & $0.20313$ & $x^{3} - x^{2} - x - 1$ \\ \hline
$211$ & $0.093733$ & $x^{3} - x^{2} + 1$ \\ \hline
$223$ & $0.093733$ & $x^{3} - x^{2} + 1$ \\ \hline
$227$ & $0.12741$ & $x^{3} - x^{2} - 1$ \\ \hline
$229$ & $0.23105$ & $x^{3} - x^{2} + x - 2$ \\ \hline
$233$ & $0.27319$ & $x^{3} - x^{2} - 2 \, x - 2$ \\ \hline
$239$ & $0.26986$ & $x^{3} - 2 \, x^{2} - x + 1$ \\ \hline
$241$ & $0.30697$ & $x^{3} - x^{2} - 3 \, x - 2$ \\ \hline
$251$ & $0.23105$ & $x^{3} - x - 2$ \\ \hline
$257$ & $0.20313$ & $x^{3} - x^{2} - x - 1$ \\ \hline
$263$ & $0.27319$ & $x^{3} - x^{2} - 2 \, x - 2$ \\ \hline
$269$ & $0.20313$ & $x^{3} - x^{2} - x - 1$ \\ \hline
$271$ & $0.093733$ & $x^{3} - x^{2} + 1$ \\ \hline
$277$ & $0.23105$ & $x^{3} - x^{2} - 2$ \\ \hline
$281$ & $0.26986$ & $x^{3} - 2 \, x^{2} - x + 1$ \\ \hline
$283$ & $0.12741$ & $x^{3} - x^{2} - 1$ \\ \hline
$293$ & $0.12741$ & $x^{3} - x^{2} - 1$ \\ \hline
$307$ & $0.093733$ & $x^{3} - x^{2} + 1$ \\ \hline
$311$ & $0.20313$ & $x^{3} - x^{2} - x - 1$ \\ \hline
$313$ & $0.23105$ & $x^{3} - 2 \, x - 2$ \\ \hline
$317$ & $0.093733$ & $x^{3} - x^{2} + 1$ \\ \hline
$331$ & $0.28206$ & $2 \, x^{3} - x^{2} + 2$ \\ \hline
$337$ & $0.26986$ & $x^{3} - 2 \, x^{2} - x + 1$ \\ \hline
$347$ & $0.093733$ & $x^{3} - x^{2} + 1$ \\ \hline
$349$ & $0.12741$ & $x^{3} - x^{2} - 1$ \\ \hline
$353$ & $0.23105$ & $x^{3} - x^{2} - 2$ \\ \hline
$359$ & $0.23105$ & $x^{3} - x - 2$ \\ \hline
$367$ & $0.23105$ & $x^{3} - x^{2} - 2$ \\ \hline
$373$ & $0.23105$ & $x^{3} - x^{2} - x + 2$ \\ \hline
$379$ & $0.12741$ & $x^{3} - x^{2} - 1$ \\ \hline
$383$ & $0.23105$ & $x^{3} - x^{2} - x + 2$ \\ \hline
$389$ & $0.23105$ & $x^{3} - x^{2} - x + 2$ \\ \hline
$397$ & $0.20313$ & $x^{3} - x^{2} - x - 1$ \\ \hline
$401$ & $0.20313$ & $x^{3} - x^{2} - x - 1$ \\ \hline
$409$ & $0.30387$ & $2 \, x^{3} - 2 \, x^{2} + x - 2$ \\ \hline
$419$ & $0.20313$ & $x^{3} - x^{2} - x - 1$ \\ \hline
$421$ & $0.20313$ & $x^{3} - x^{2} - x - 1$ \\ \hline
$431$ & $0.12741$ & $x^{3} - x^{2} - 1$ \\ \hline
$433$ & $0.23105$ & $x^{3} - 2$ \\ \hline
$439$ & $0.23105$ & $x^{3} - x^{2} - x + 2$ \\ \hline
$443$ & $0.23105$ & $x^{3} - x^{2} - 2$ \\ \hline
\end{tabular}
\end{center}
\end{minipage}

\newpage
\section{Conclusion \& Future Work}\label{conclusion}
In this paper we relied on the fact that we can determine that a finite list of polynomials is guaranteed to contain one that splits over $\Q_p$ for any prime $p$. We restricted our search to cubic numbers that exist in abelian extensions of $\Q$ to prove this. Moving forward, we will determine that we can guarantee that for any degree $d$, there is some $N_d \in \Z$ such that $\tau_{d,p}^\ab$ depends only on $p \pmod {N_d}$. For example, $N_2 = 5$ and $N_3 = 228979643050431$.

When we look at the small nonzero values attained by the height function on cubic numbers, we see that the smallest value is $0.093733$. It would be interesting to classify all primes such that $\tau_{3,p} = 0.093733$.


\end{document}